\newtheorem{theorem}{Theorem}[section]
\newtheorem{thm}{Theorem}
\newtheorem{lemma}[theorem]{Lemma}
\theoremstyle{definition}
\newtheorem{remark}[theorem]{Remark}
\newcommand{\Stab}{\mathrm{Stab}}
\newcommand{\PGL}{\mathrm{PGL}}
\newcommand{\Sym}{\mathrm{Sym}}
\newcommand{\Alt}{\mathrm{Alt}}
\newcommand{\M}{\mathrm{M}}
\newcommand{\GF}[1]{\mathbb{GF}(#1)}
\newcommand{\PP}[1]{\mathbb{P}(#1)}
\renewcommand{\P}[1]{\mathcal{P}(#1)}
\renewcommand{\L}[1]{\mathcal{L}(#1)}
\renewcommand{\t}[1]{\tau_{#1}}
\newcommand{\h}[1]{h_{({#1})}}
\newcommand{\x}[1]{x_{({#1})}}
\newcommand{\Pseudo}[1]{\frak{G}(#1)}
\newcommand{\PseudoG}[1]{\mathcal{G}(#1)}
\begin{document}
\title{\Huge{ $\PP{q}$-groupoids of Conway type}}
\author{Veronica Kelsey\footnote{Email address of corresponding author: veronica.kelsey@manchester.ac.uk  \newline Key words: groupoids, projective planes \newline MSC: 20B25}, Peter Rowley \\
 \small{Department of Mathematics, University of Manchester, Oxford Road, M13 6PL, UK}\\ \small{Heilbronn Institute for
Mathematical Research, Bristol, UK}}
 \maketitle
\begin{abstract}
In the spirit of Conway \cite{Conwaysurvey} we define a groupoid starting from projective planes of order $q$, where $q$ is odd. The associated group of these groupoids is then investigated.

\end{abstract}
\date{}

\section{Introduction}\label{Sec:Intro}
Noyes Palmer Chapman's classic 15-Puzzle, dating back to the 1870s,  consists of 15 numbered tiles and one hole in a $4 \times 4$ tray. In this puzzle, often incorrectly credited to Sam Loyd (see Slocum and Sonneveld \cite{SS}), tiles are slid horizontally or vertically and the goal is to arrange the tiles in numerical order.

Taking inspiration from the 15-Puzzle Conway \cite{Conwaysurvey} proposed a variation based on replacing the $4 \times 4$ tray with the 13 points of the projective plane of order 3. Following on from Conway's idea there have been a number of other variations on this theme involving certain graphs or block designs. For an example of the former using regular $3$-valency graphs see Wilson \cite{Wilson} and Archer \cite{Archer} and for the latter, employing $2-(n,4,\lambda)$ designs, consult Gill, Gillesie, Praeger, Semeraro \cite{GGPS} and Gill, Gillesie, Semeraro \cite{GGS}.  Yet other perspectives may be seen in Ekenta,  Jang, Siehler \cite{EJS}. Here we study a further extension of these ideas but continuing with the theme of projective planes.\medskip 

We also emulate the 15-Puzzle construction. In place of a $4 \times 4$ tray, we consider the points $\P{q}$ of the projective plane $\PP{q}$. For each $\alpha,\beta \in \P{q}$ we will define a unique move $\h{\alpha,\beta}$ which interchanges $\alpha$ and $\beta$, acts as an element of order 2 on the line $\langle \alpha, \beta \rangle$, and fixes the remaining points of $\P{q}$.
Let $\Pseudo{q}$ be the set of elements given by sequences of elementary moves of the form 
\[\h{\alpha_0,\alpha_1,\ldots, \alpha_s}:=\h{\alpha_0,\alpha_1}\h{\alpha_1,\alpha_2}\cdots \h{\alpha_{s-1},\alpha_s}.\]
Two elements $\h{\alpha_0,\ldots, \alpha_s}, \h{\beta_0,\ldots, \beta_t} \in \Pseudo{q}$ are multiplied by concatenation provided $\alpha_s=\beta_0$ to give $\h{\alpha_0,\ldots, \alpha_s, \beta_1,\ldots, \beta_t}$. Since not all possible products are defined, $\Pseudo{q}$ is a groupoid and not a group.

If we fix $\alpha \in \P{q}$, then we have a group $\PseudoG{q}$ defined by
\[\PseudoG{q}: = \{ \h{\alpha,\alpha_1 , \ldots, \alpha_{s-1}, \alpha} \in \Pseudo{q} \} \leq \Sym\left(\P{q} \backslash \{\alpha\}\right) .\] 
 
Taking $q = 3$ recovers Conway's construction \cite{Conwaysurvey} yielding, as he termed it, the pseudogroup $\M_{13}$, which in our notation is $\Pseudo{3}$. Conway shows that for any $\alpha \in \P{3}$, the group $\PseudoG{3}$ is isomorphic to $\M_{12}$, the Mathieu group of degree $12$. Further, in \cite{Conwaysurvey}, Conway also considers what he terms the doubling of $\M_{13}$ which ends up exposing the double cover of $\M_{12}$. In Conway, Elkies and Martin \cite{CEM} (see also Martin \cite{Martin}) there is an extended account of both $\M_{13}$ and its doubling.\medskip

In this paper we determine $\PseudoG{q}$.
\begin{thm}\label{main} Let $q>3$ be an odd prime power, $\alpha$ a fixed point of $\P{q}$ and set $\Omega = \P{q} \backslash \{\alpha\}$. Then
\[\PseudoG{q}= \begin{cases}
\Alt(\Omega) & \text{ if } q \equiv 3 \bmod 4,\\
\Sym(\Omega)& \text{ if }q \equiv 1 \bmod 4.
\end{cases}\]
\end{thm}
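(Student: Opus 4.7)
The plan is to prove the two containments separately: that $\PseudoG{q}$ is contained in the claimed group ($\Alt(\Omega)$ or $\Sym(\Omega)$ according to $q \bmod 4$), and that $\PseudoG{q} \supseteq \Alt(\Omega)$ in both cases.

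For the upper bound, I would compute the sign of each elementary move $\h{\beta,\gamma}$ as a permutation of $\P{q}$. Since $\h{\beta,\gamma}$ fixes every point off the line $\langle \beta, \gamma\rangle$ and acts as an involution on the $q+1$ points of that line, its sign is $(-1)^{k}$ where $k \in \{(q+1)/2,(q-1)/2\}$ counts the transpositions in its cycle decomposition (the two cases corresponding to whether the line-involution is fixed-point-free or has two fixed points). Any $g \in \PseudoG{q}$ is a product $\h{\alpha,\alpha_1}\h{\alpha_1,\alpha_2}\cdots \h{\alpha_{s-1},\alpha}$ which fixes $\alpha$, so its sign on $\Omega$ equals its sign on $\P{q}$, which is just the product of the signs of the elementary factors. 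A direct arithmetic check modulo $4$ should then confirm that every such product is even exactly when $q \equiv 3 \pmod 4$; in the complementary case $q \equiv 1 \pmod 4$, one exhibits a single short loop $\h{\alpha,\alpha_1,\ldots,\alpha_{s-1},\alpha}$ of sign $-1$ to rule out containment in $\Alt(\Omega)$.

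For the lower bound, I follow a standard three-step scheme: (i) show $\PseudoG{q}$ is transitive on $\Omega$ by writing down explicit words sending any $\beta \in \Omega$ to any $\gamma \in \Omega$ while returning $\alpha$ to itself, exploiting the rich incidence geometry of $\PP{q}$; (ii) upgrade to primitivity, either directly via $2$-transitivity or by ruling out the only obvious block system, namely the $q+1$ punctured lines through $\alpha$; and (iii) exhibit an element of $\PseudoG{q}$ whose support is of size at most some small prime $p$, ideally a $3$-cycle. Once (iii) is secured, Jordan's theorem on primitive groups containing a $p$-cycle with $p$ prime and $p \le |\Omega|-3$ forces $\PseudoG{q} \supseteq \Alt(\Omega)$, which together with the parity calculation proves the theorem.

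The main obstacle will be step (iii): producing a small-support element. Each elementary move already has support of size $\approx q$ on $\P{q}$, and the support of a generic product grows with the length of the sequence, so shrinking it back down to $O(1)$ requires a carefully tuned geometric configuration in which almost all of the intermediate motion cancels. The natural attempt is to let the sequence $\alpha,\alpha_1,\ldots,\alpha_{s-1},\alpha$ trace out a short closed walk along a triangle of lines, or along two lines meeting at $\alpha$ together with a transversal, so that the accumulated action is supported only on the intersection points of the configuration. Checking that some such configuration actually yields a $3$-cycle (or a $p$-cycle of small prime length) \emph{uniformly in $q$}, and possibly in a way sensitive to the residue $q \bmod 4$, is the central combinatorial difficulty and is where I expect the proof to be most delicate.
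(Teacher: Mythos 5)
Your overall architecture---a parity computation for the upper bound, and primitivity plus a small-support element for the lower bound---matches the paper's, and the parity part goes through essentially as you describe: each elementary move is a product of exactly $\frac{q+1}{2}$ transpositions (the hedge between $\frac{q+1}{2}$ and $\frac{q-1}{2}$ is unnecessary, since $\t{\beta,\gamma}$ is fixed-point-free on the $q-1$ remaining points of its line), and a closed triangle of non-collinear points supplies an odd permutation when $q\equiv 1\bmod 4$. The genuine gap is your step (iii). You propose to manufacture a $3$-cycle, or at least a $p$-cycle of small prime length, and then invoke the classical Jordan theorem. No such element is available in any uniform way: the natural small-support elements are the $\x{\beta,\gamma}$ with $\alpha,\beta,\gamma$ collinear on a line $\ell$, whose support lies in $\Gamma_0(\ell)\setminus\{\alpha\}$, and their cycle types vary wildly with $q$ (for instance $2^3.7^1$ at $q=13$ and $1^2.13^1.14^1$ at $q=29$); there is no sign of a $3$-cycle or of a prime cycle of controlled length, and you yourself flag this as the point where the plan is stuck.

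The way out is to use a Jordan-type theorem that requires only small \emph{support}, not a prime cycle: by the Liebeck--Saxl minimal degree theorem, a primitive group of degree $n$ containing a non-identity element whose support has size less than $2(\sqrt{n}-1)$ already contains $\Alt(n)$. With $n=|\Omega|=q^2+q$ and the collinear element $\x{\beta,\gamma}$ of support at most $q+1$, the inequality $q+1<2(\sqrt{q^2+q}-1)$ holds for all $q>3$, so the cycle structure is irrelevant. This shifts the burden to two places you currently treat too lightly. First, one must prove $\x{\beta,\gamma}\neq 1$ in the collinear case; this is a genuine computation (split according to whether $q$ is a power of $3$), and it is exactly the point that fails at $q=3$, which is why $\M_{12}$ appears there rather than $\Alt(12)$. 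Second, for primitivity it does not suffice to rule out the ``obvious'' block system of punctured lines through $\alpha$; one must exclude arbitrary block systems, which the paper does by showing that any nontrivial block can be translated to one that properly contains a full punctured line and then deriving a contradiction using the transitivity of the line subgroups $G(\ell)$ on $\Gamma_0(\ell)\setminus\{\alpha\}$.
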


The sporadic group $\M_{12}$ has a history of having tendrils in many, and varied, combinatorial areas. See \cite{atlas} and \cite{CS} for a summary of some of these. In relation to $\PseudoG{q}$, yet again $\M_{12}$ stands as a kind of singularity.

\section{Preliminaries}
We begin with some general facts on projective plans.
Let $q$ be a power of an odd prime and $\PP{q}$ denote the projective plane defined over $\GF{q}$. Hence $\PP{q}$ consists of a point set $\P{q}$ and a line set $\L{q}$, each of size $q^2+q+1$.
It is well-known that every distinct pair of points $\beta,\gamma \in \P{q}$ lie on a unique line which we denote by $\langle \beta, \gamma \rangle$. Dually, any two distinct lines $\ell, \ell' \in \L{q}$ intersect in a unique point which we denote by $\ell \cap \ell'$. For a line $\ell\in \L{q}$, let $\Gamma_0(\ell)$ denote the set of points incident with $\ell$. We use $\{e_1,e_2,e_3\}$ to denote the standard basis vectors of $\GF{q}^3$.

\begin{lemma}\label{cyclicaction} 
Let $L =\PGL(3,q)$ act on $\PP{q}$, let $\beta, \gamma \in \P{q}$ be distinct, and let $\ell=\langle \beta,\gamma\rangle$. 
\begin{enumerate}[label=\rm{(\roman*)}]
\item \label{beta,ell} $L_{\beta,\ell}: = \Stab_L(\beta) \cap \Stab_L(\ell)$ has shape $q^3(q-1)^2$ and acts $2$-transitively on $\Gamma_0(\ell) \setminus \{ \beta \}$.
\item \label{beta,gamma}  $L_{\beta,\gamma}: = \Stab_L(\beta) \cap \Stab_L(\gamma)$ has shape $q^2:(q - 1)^2$ with the $q^2(q-1)$ fixing all of $\Gamma_0(\ell)$, and the induced action on $\Gamma_0(\ell) \setminus \{ \beta , \gamma\}$ is cyclic of order $q - 1$.
\end{enumerate}
\end{lemma}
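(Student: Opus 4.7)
My plan is to work in explicit matrix coordinates, exploiting the fact that $L = \PGL(3,q)$ acts transitively on incident point-line pairs. By this transitivity, I may take $\beta = \langle e_1 \rangle$ and $\ell = \langle e_1, e_2\rangle$ without loss of generality, and in part (ii), take additionally $\gamma = \langle e_2 \rangle$.

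For part \ref{beta,ell}, the condition that a matrix $M \in L$ fix the point $\langle e_1\rangle$ and the plane $\langle e_1, e_2 \rangle$ forces $M$ (after scaling so that the $(1,1)$-entry equals $1$) into upper triangular shape
\[
M = \begin{pmatrix} 1 & b & c \\ 0 & d & e \\ 0 & 0 & g \end{pmatrix}, \qquad d, g \in \GF{q}^\times, \; b,c,e \in \GF{q},
\]
which immediately yields $|L_{\beta,\ell}| = q^3(q-1)^2$. Parametrising the points of $\Gamma_0(\ell) \setminus \{\beta\}$ by $t \in \GF{q}$ via $\langle e_2 + t e_1\rangle$, a direct computation gives the induced map $t \mapsto d^{-1}(t+b)$. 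As $(b,d)$ ranges over $\GF{q} \times \GF{q}^\times$ this realises the full affine group $\mathrm{AGL}(1,q)$, which is sharply 2-transitive on $\GF{q}$.

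For part \ref{beta,gamma}, the additional requirement of fixing $\gamma = \langle e_2\rangle$ forces $b = 0$, leaving
\[
M = \begin{pmatrix} 1 & 0 & c \\ 0 & d & e \\ 0 & 0 & g \end{pmatrix},
\]
so $|L_{\beta,\gamma}| = q^2(q-1)^2$; the Levi/unipotent shape $q^2{:}(q-1)^2$ is then visible by separating the unipotent part ($d=g=1$) from the diagonal complement. Applying $M$ to $e_1 + te_2$ with $t \in \GF{q}^\times$ yields $e_1 + td\, e_2$, so the induced action on $\Gamma_0(\ell)\setminus\{\beta,\gamma\}$ is $t \mapsto dt$; this kernel is precisely $\{M : d = 1\}$, a subgroup of order $q^2(q-1)$ fixing $\Gamma_0(\ell)$ pointwise, and the induced quotient is cyclic of order $q-1$ generated by scaling by a primitive element of $\GF{q}^\times$.

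I expect no serious obstacle: the argument is essentially a bookkeeping exercise in block-triangular matrices, with the only subtlety being the passage from $\GL$ to $\PGL$, which is handled cleanly by fixing the $(1,1)$-entry to be $1$ in each case.
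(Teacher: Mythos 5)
Your proposal is correct and follows essentially the same route as the paper: reduce to the standard configuration $\beta = \langle e_1\rangle$, $\gamma = \langle e_2\rangle$ by transitivity and read off the stabilizers as explicit block-triangular matrices (the paper uses row vectors with right multiplication, hence lower-triangular matrices, but this is only a transposition of convention). Your version is if anything slightly more explicit, since you write out the induced affine action $t \mapsto d^{-1}(t+b)$ and identify it with $\mathrm{AGL}(1,q)$, where the paper simply displays the $2\times 2$ matrices.
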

\begin{proof} Since $L$ is 2-transitive on $\P{q}$ we may assume that $\beta=\langle e_1 \rangle$ and $\gamma=\langle e_2 \rangle$. 

The induced elements of $L_{\beta,\ell}$, and of $L_{\beta,\ell}$'s action on $\Gamma_0(\ell)\backslash \{\beta\}$ can be represented by right multiplication of the following matrices where $a,b ,c \in \GF{q}^*$ and $d,e,f,g \in \GF{q}$.
\[ \begin{pmatrix}
1 & 0 & 0\\
d & a & 0\\
e & f & b
\end{pmatrix} \quad \quad
\begin{pmatrix}
1 & 0 \\
g & c
\end{pmatrix}  \]

The induced elements of $L_{\beta,\gamma}$ are those of $L_{\beta,\ell}$ with $d=0$; of $L_{\beta,\gamma}$ fixing all of $\Gamma_0(\ell)$ are those of $L_{\beta,\ell}$ with $a=1$ and $d=0$; and the induced action of $L_{\beta,\ell}$ on $\Gamma_0(\ell)\backslash \{\beta, \gamma\}$ are those of $L_{\beta,\ell}$'s action on $\Gamma_0(\ell)\backslash \{\beta\}$ with $g=0$. \end{proof}

Let $\t{\beta, \gamma}$ denote an involution in $L_{\beta,\gamma}$, whose support is restricted to the points on $\ell \setminus \{ \beta, \gamma\}$. By Lemma \ref{cyclicaction} its action on $\Gamma_0(\ell) \backslash \{\beta,\gamma\}$ is well-defined and $\t{\beta,\gamma}$ will be the product of $(q-1)/2$ transpositions. In the case of $\beta=\langle e_1 \rangle$ and $\gamma=\langle e_2 \rangle$ the action of $\t{\beta,\gamma}$ on $\ell \backslash \{\beta, \gamma\}$ is as follows
\[\t{\beta,\gamma} : \langle ae_1 + be_2 \rangle \mapsto  \langle ae_1 + be_2 \rangle \begin{pmatrix}1 & 0 \\ 0 & -1 \end{pmatrix} = \langle ae_1-be_2 \rangle \;\;\; \text{ for } a,b \in \GF{q}^*.\]

We mirror the 15-Puzzle construction as follows. Place numbered counters on $q^2+q$ of the points and leave the remaining point empty.  Following \cite{Conwaysurvey} we call this empty point the \emph{hole}. 
For each choice of counter $t$ we can define an elementary move. Let $\beta \in \P{q}$ be the hole and $\gamma \in \P{q}$ be the point covered by $t$. Then the corresponding \emph{elementary move}, $\h{\beta,\gamma}$, is given by moving the counter $t$ to $\beta$ (thus leaving $\gamma$ empty), and applying $\t{\beta,\gamma}$. 
It is then well defined to apply $\h{\delta,\epsilon}$ after $\h{\beta,\gamma}$ if and only if $\gamma=\delta$.

As introduced in Section \ref{Sec:Intro}, the groupoid $\Pseudo{q}$ consists of well defined concatenations of elementary moves. That is, the elements $\h{\beta_0,\beta_1,\ldots, \beta_s}:=\h{\beta_0,\beta_1}\h{\beta_1,\beta_2}\cdots \h{\beta_{s-1},\beta_s}$. The sequence $\beta_0,\beta_1,\ldots, \beta_s$ gives the path that the hole takes through $\P{q}$.
\section{Proof of Theorem \ref{main}}
Throughout this section, fix $q>3$ an odd prime power and let $G:=\PseudoG{q}$.
Recall that $\alpha$ is a fixed point of $\P{q}$. 
For $\beta, \gamma \in \Omega$ with $\beta \ne \gamma$, put
$$\x{\beta, \gamma} = \h{\alpha, \beta}\h{\beta, \gamma}\h{\gamma,\alpha}.$$
Observe that $G$ is generated by these $\x{\beta,\gamma}$.
We begin by proving a lemma about the elements $\x{\beta,\gamma}$ where $\alpha, \beta$ and $\gamma$ do not all lie on a common line of $\PP{q}$.

\begin{lemma}\label{notcollinear} Suppose that $\beta, \gamma \in \Omega$ are distinct, and that $\alpha, \beta, \gamma$ are not collinear in $\mathbb{P}(q)$. Then 
\[\x{\beta, \gamma} = (\beta,\gamma)\t{\alpha,\beta}\t{\beta,\gamma}\t{\gamma,\alpha}\]
with $\x{\beta,\gamma}$ a product of $\frac{3q - 1}{2}$ pairwise disjoint transpositions.
\end{lemma}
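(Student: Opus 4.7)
The plan is to expand each elementary move $\h{\cdot,\cdot}$ appearing in $\x{\beta,\gamma} = \h{\alpha,\beta}\h{\beta,\gamma}\h{\gamma,\alpha}$ into a concrete permutation of $\P{q}$, and then multiply the three permutations out, using the non-collinearity of $\alpha,\beta,\gamma$ to obtain disjoint supports.

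First I would note that, viewed as a bijection of $\P{q}$, each elementary move factors very simply: if $\delta$ is the hole and $\epsilon$ the covered point, then $\h{\delta,\epsilon}$ sends $\delta \leftrightarrow \epsilon$ while permuting the remaining points of $\ell = \langle \delta,\epsilon\rangle$ according to $\t{\delta,\epsilon}$, and fixes everything off $\ell$. Since $\t{\delta,\epsilon}$ fixes $\delta$ and $\epsilon$, this gives the clean identity
\[
\h{\delta,\epsilon} = (\delta,\epsilon)\,\t{\delta,\epsilon}
\]
as elements of $\Sym(\P{q})$, with the transposition $(\delta,\epsilon)$ and the involution $\t{\delta,\epsilon}$ commuting and having disjoint supports.

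Next I would use the non-collinearity hypothesis. The three lines $\langle\alpha,\beta\rangle$, $\langle\beta,\gamma\rangle$, $\langle\gamma,\alpha\rangle$ are pairwise distinct and meet only at $\alpha$, $\beta$, $\gamma$. Hence the supports of $\t{\alpha,\beta}$, $\t{\beta,\gamma}$, $\t{\gamma,\alpha}$, each of the form $\Gamma_0(\ell)\setminus\{\text{the two endpoints}\}$, are pairwise disjoint and disjoint from $\{\alpha,\beta,\gamma\}$. In particular each of these three involutions commutes with each of the transpositions $(\alpha,\beta)$, $(\beta,\gamma)$, $(\gamma,\alpha)$ (the involutions fix all three of $\alpha,\beta,\gamma$). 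This lets me freely reorder factors in the product
\[
\x{\beta,\gamma} = (\alpha,\beta)\,\t{\alpha,\beta}\,(\beta,\gamma)\,\t{\beta,\gamma}\,(\gamma,\alpha)\,\t{\gamma,\alpha} = \bigl[(\alpha,\beta)(\beta,\gamma)(\gamma,\alpha)\bigr]\,\t{\alpha,\beta}\,\t{\beta,\gamma}\,\t{\gamma,\alpha}.
\]

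Then I would observe that the three-transposition product telescopes: a direct cycle computation gives $(\alpha,\beta)(\beta,\gamma)(\gamma,\alpha) = (\beta,\gamma)$, yielding $\x{\beta,\gamma} = (\beta,\gamma)\,\t{\alpha,\beta}\,\t{\beta,\gamma}\,\t{\gamma,\alpha}$, as claimed. Finally, the transposition count is immediate from Lemma \ref{cyclicaction}: each $\t{\cdot,\cdot}$ is a product of $(q-1)/2$ transpositions with support in its line, these supports are pairwise disjoint from one another and from $\{\beta,\gamma\}$ (using non-collinearity again for the latter, to ensure $\gamma\notin\langle\alpha,\beta\rangle$ etc.), so the whole expression is a product of $3\cdot(q-1)/2 + 1 = (3q-1)/2$ pairwise disjoint transpositions.

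The main obstacle is essentially bookkeeping: one has to be careful about the interpretation of $\h{\delta,\epsilon}$ as an element of $\Sym(\P{q})$ rather than merely as a move on counters, and about verifying the commutations cleanly. Once the factorization $\h{\delta,\epsilon}=(\delta,\epsilon)\t{\delta,\epsilon}$ is in hand, the non-collinearity of $\alpha,\beta,\gamma$ makes the rest a short disjoint-support calculation with no real combinatorial difficulty.
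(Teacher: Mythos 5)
Your proposal is correct and follows essentially the same route as the paper: factor each elementary move as $\h{\delta,\epsilon}=(\delta,\epsilon)\t{\delta,\epsilon}$, use non-collinearity to get pairwise disjoint supports for the $\t{\cdot,\cdot}$ and commutation with the three transpositions, then collapse $(\alpha,\beta)(\beta,\gamma)(\gamma,\alpha)=(\beta,\gamma)$ and count $3\cdot\frac{q-1}{2}+1=\frac{3q-1}{2}$. No gaps.
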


\begin{proof} By definition 
\[\h{\alpha,\beta} = (\alpha,\beta)\t{\alpha,\beta}, \quad \h{\beta,\gamma} = (\beta,\gamma)\t{\beta,\gamma} \quad \text{and} \quad \h{\gamma,\alpha} = (\gamma,\alpha)\t{\gamma,\alpha}\] where $\t{\alpha,\beta}, \t{\beta,\gamma}$ and $\t{\gamma,\alpha}$ are each products of $\frac{q-1}{2}$ pairwise disjoint transpositions. By assumption $\alpha,\beta$ and $\gamma$ are not collinear, and so the supports of $\t{\alpha,\beta}$, $\t{\beta,\gamma}$ and $\t{\gamma, \alpha}$ are pairwise disjoint. In addition, $(\alpha,\beta), (\beta,\gamma)$ and $(\gamma,\alpha)$ all commute with each of $\t{\alpha,\beta}, \t{\beta,\gamma}$ and $\t{\gamma,\alpha}$. Hence from $(\alpha,\beta)(\beta,\gamma)(\gamma,\alpha)  = (\alpha)(\beta,\gamma)$, we obtain the required expression for $\x{\beta,\gamma}$. The number of transpositions is $3(\frac{q-1}{2} ) + 1 = \frac{3q - 1}{2}$.
\end{proof}

We now consider the case of $\alpha, \beta$ and $\gamma$ being collinear. We begin with some notation.
For a line $\ell\in \L{q}$ define the following subgroup of $G$,
\[G(\ell) = \left\langle \x{\beta,\gamma} \;| \; \beta, \gamma \in \Gamma_0(\ell) \setminus \{ \alpha \}, \; \beta \neq \gamma \right\rangle.\]
Recall that $L = \PGL_3(q)$ and $L_{\alpha,\ell} = \Stab_L(\alpha) \cap \Stab_L(\ell)$.

\begin{lemma}\label{collinear} 
Suppose that $\beta, \gamma \in \Omega$ are distinct, and that $\alpha, \beta$ and $ \gamma$ are collinear in $\mathbb{P}(q)$. Let $\ell$ be the line they all lie on and let $g \in L_{\alpha,\ell}$. Then the following hold.
\begin{enumerate}[label=\rm{(\roman*)}]
\item $g^{-1}\x{\beta, \gamma}g = \x{\beta^g,\gamma^g}$
\item $\x{\beta, \gamma} \neq 1$
\item $L_{\alpha,\ell}$ normalizes $G(\ell)$
\item $G(\ell)$ acts transitively on $\Gamma_0(\ell) \setminus \{\alpha\}$
\end{enumerate}
\end{lemma}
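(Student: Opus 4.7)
I would prove the four parts in the order (i), (iii), (ii), (iv): (iii) is a formal consequence of (i), (iv) will follow from (ii) and (iii), and the real computational content sits in (ii).

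For (i), since $g \in L_{\alpha,\ell}$ fixes $\alpha$ and preserves $\ell$, it fixes every point of $\P{q} \setminus \Gamma_0(\ell)$ and permutes $\Gamma_0(\ell) \setminus \{\alpha\}$. Conjugation by $g$ sends the transposition $(\alpha,\beta)$ to $(\alpha,\beta^g)$, and sends $\t{\alpha,\beta}$ to an involution which lies in $g^{-1}L_{\alpha,\beta}g = L_{\alpha,\beta^g}$, is trivial off $\ell$, and on $\Gamma_0(\ell) \setminus \{\alpha,\beta^g\}$ realises the unique involution in the cyclic action of Lemma~\ref{cyclicaction}\ref{beta,gamma}; by this uniqueness, the conjugate equals $\t{\alpha,\beta^g}$. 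Hence $g^{-1}\h{\alpha,\beta}g = \h{\alpha,\beta^g}$, and applying this to each factor of $\x{\beta,\gamma} = \h{\alpha,\beta}\h{\beta,\gamma}\h{\gamma,\alpha}$ gives (i). Part (iii) is then immediate: by (i), $L_{\alpha,\ell}$ permutes the generating set $\{\x{\beta,\gamma}\}$ of $G(\ell)$ and so normalizes $G(\ell)$.

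Part (ii) is the main obstacle. Since $L_{\alpha,\ell}$ is $2$-transitive on $\Gamma_0(\ell) \setminus \{\alpha\}$ by Lemma~\ref{cyclicaction}\ref{beta,ell}, combining this with (i) shows that all the elements $\x{\beta,\gamma}$ (with $\beta,\gamma$ distinct in $\Gamma_0(\ell) \setminus \{\alpha\}$) are $L_{\alpha,\ell}$-conjugate, so it suffices to produce one nontrivial $\x{\beta,\gamma}$. I would coordinatise $\ell$ as $\GF{q} \cup \{\infty\}$ with $\alpha = 0$, $\beta = b$, $\gamma = c$ for distinct $b,c \in \GF{q}^*$; then $\t{a,b}$ acts on $\ell$ as the M\"obius map $y \mapsto \frac{2ab - (a+b)y}{a+b-2y}$, extended by the identity off $\ell$. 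A direct tracking of the hole's path yields $\x{\beta,\gamma}(\beta) = \frac{-2bc}{c-3b}$, which equals $\beta$ only if $3b(b-c) = 0$; since $b \ne 0, c$, this rules out $\x{\beta,\gamma} = 1$ whenever $\mathrm{char}(\GF{q}) \ne 3$. In characteristic $3$ one has $q \ge 9$, and the same calculation shows that $\alpha,\beta,\gamma$ are all now fixed by $\x{\beta,\gamma}$, so I would instead compute $\x{\beta,\gamma}(y)$ for $y \in \ell \setminus \{\alpha,\beta,\gamma\}$. On the generic branch (where no intermediate image of $y$ coincides with $\alpha,\beta$ or $\gamma$, which excludes only at most two values of $y$) the equation $\x{\beta,\gamma}(y) = y$ reduces to a quadratic in $y$ over $\GF{q}$, with at most two roots; as $q-2 \ge 7$, there is ample room to find $y$ with $\x{\beta,\gamma}(y) \ne y$.

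Finally, for (iv), part (iii) tells us $L_{\alpha,\ell}$ normalizes $G(\ell)$, so it permutes the $G(\ell)$-orbits on $\Gamma_0(\ell) \setminus \{\alpha\}$. A standard block-of-imprimitivity argument, using the $2$-transitivity of $L_{\alpha,\ell}$ on this set, shows that the only $L_{\alpha,\ell}$-invariant partitions are the singleton partition and the trivial one-block partition. The singleton option is ruled out by (ii), since some $\x{\beta,\gamma}$ moves a point of $\Gamma_0(\ell) \setminus \{\alpha\}$. Hence $G(\ell)$ acts transitively on $\Gamma_0(\ell) \setminus \{\alpha\}$.
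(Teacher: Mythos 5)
Your overall architecture matches the paper's quite closely: (i) is proved by conjugating each factor of $\x{\beta,\gamma}$ and invoking the uniqueness of the involution in the induced cyclic action; (iii) is the same formal consequence of (i); and your argument for (iv) --- $L_{\alpha,\ell}$ normalizes $G(\ell)$, so the $G(\ell)$-orbits form an invariant partition, which $2$-transitivity forces to be trivial, and (ii) rules out the singleton partition --- is exactly the paper's ``$G(\ell)$ is a nontrivial normal subgroup of the primitive (because $2$-transitive) group $L_{\alpha,\ell}G(\ell)$, hence transitive'', just with the standard proof of that fact written out. The genuinely different ingredient is (ii): the paper fixes $\alpha=\langle e_1+e_2\rangle$, $\beta=\langle e_1\rangle$, $\gamma=\langle e_2\rangle$ in homogeneous coordinates and tracks one explicit point ($\langle 2e_1+e_2\rangle$ in general, $\langle e_1+\omega e_2\rangle$ when $q=3^r$), whereas you coordinatise $\ell$ as $\GF{q}\cup\{\infty\}$ and work with the M\"obius form of $\t{a,b}$. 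Your formula for $\t{a,b}$ is the correct unique involution fixing $a$ and $b$, and the value $\x{\beta,\gamma}(\beta)=\frac{-2bc}{c-3b}$ checks out, so the case $\mathrm{char}\,\GF{q}\ne 3$ is sound and arguably cleaner than the paper's.

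Two points need repair. First, in (i) you assert that $g\in L_{\alpha,\ell}$ ``fixes every point of $\P{q}\setminus\Gamma_0(\ell)$''. This is false: only the identity of $\PGL(3,q)$ fixes all $q^2$ points off a line (they contain a quadrilateral), while $L_{\alpha,\ell}$ has order $q^3(q-1)^2$; e.g.\ the matrices in the proof of Lemma~\ref{cyclicaction} with $e$ or $f$ nonzero move $\langle e_3\rangle$. Fortunately you only need that $g$ preserves $\Gamma_0(\ell)$ (hence its complement) setwise, which already gives that $\t{\alpha,\beta}^g$ is trivial off $\ell$, so the proof survives with that correction. Second, and more substantively, in the characteristic-$3$ case the step ``$\x{\beta,\gamma}(y)=y$ reduces to a quadratic with at most two roots'' silently assumes that the composite M\"obius map $M=\t{\alpha,\beta}\t{\beta,\gamma}\t{\gamma,\alpha}$, which is what $\x{\beta,\gamma}$ computes on the generic branch, is not the identity; if it were, your ``quadratic'' would vanish identically and every generic point would be fixed. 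This is true and takes one line --- $\t{\alpha,\beta}\t{\beta,\gamma}$ fixes $\beta$, while the involution $\t{\gamma,\alpha}$ has fixed-point set exactly $\{\gamma,\alpha\}$, so the product cannot be $1$; alternatively, in characteristic $3$ your own formula gives $M(\alpha)=\frac{-2bc}{c-3b}=b=\beta\ne\alpha$ --- but without it the fixed-point count ($\le 2$ fixed points of $M$ plus at most $2$ non-generic values against $q-2\ge 7$ available points) does not close.
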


\begin{proof} 

\begin{enumerate}[label=\rm{(\roman*)}]
\item Since $\t{\beta, \gamma}^g \in (\Stab_{L_{\alpha,\ell}}(\beta) \cap \Stab_{L_{\alpha,\ell}}(\gamma))^g= \Stab_{L_{\alpha,\ell}}(\beta^g) \cap \Stab_{L_{\alpha,\ell}}(\gamma^g)$ and $\t{\beta,\gamma}^g$ induces the same action as $\t{\beta^g,\gamma^g}$ on $\Gamma_0(\ell) \setminus \{ \beta^g, \gamma^g \}$, it follows that $\t{\beta,\gamma}^g
= \t{\beta^g,\gamma^g}.$ Hence 
\[\h{\beta,\gamma}^g =((\beta,\gamma)\t{\beta,\gamma})^g = (\beta^g,\gamma^g)\t{\beta^g,\gamma^g} = \h{\beta^g,\gamma^g}.\] 
Since $g$ fixes $\alpha$, it follows as above that $\h{\alpha,\beta}^g = \h{\alpha,\beta^g}$ and $\h{\gamma,\alpha}^g = \h{\gamma^g,\alpha}$. Therefore the result follows.

\item Fix $\alpha=\langle e_1+e_2 \rangle$. By part (i) we may assume that $\beta = \langle e_1 \rangle$ and $\gamma=\langle e_2 \rangle$. For $a,b \in \GF{q}$ we have the following.
\begin{align*}
\langle ae_1+be_2 \rangle^{\h{\alpha,\beta}} &= \begin{cases} 
\langle e_1 \rangle & \text{ if } a=b^{-1},\\
\langle e_1+e_2 \rangle & \text{ if } b=0, \\
\langle (a-2b)e_1-be_2 \rangle & \text{ otherwise.}
\end{cases}\\ \\
\langle ae_1+be_2 \rangle^{\h{\beta,\gamma}} &= \begin{cases} 
\langle e_2 \rangle & \text{ if } b=0, \\
\langle e_1 \rangle & \text{ if } a=0,\\
\langle ae_1-be_2 \rangle \quad\quad\quad\quad & \text{ otherwise.}
\end{cases} \\ \\
\langle ae_1+be_2 \rangle^{\h{\gamma,\alpha}} &= \begin{cases} 
\langle e_1+e_2 \rangle & \text{ if } a=0, \\
\langle e_2 \rangle & \text{ if } a=b^{-1},\\
\langle ae_1+(2a-b)e_2 \rangle & \text{ otherwise.}
\end{cases} 
\end{align*}
First assume that $q$ is not a 3-power. Then
\[\langle 2e_1+e_2 \rangle^{\h{\alpha,\beta}\h{\beta,\gamma}\h{\gamma,\alpha}}=\langle e_2 \rangle^{\h{\beta,\gamma}\h{\gamma,\alpha}}= \langle e_1 \rangle^{\h{\gamma,\alpha}}= \langle e_1+2e_2\rangle.\]
Hence $\x{\beta,\gamma}$ maps $\langle 2e_1+e_2 \rangle$ to $\langle e_1+2e_2 \rangle=\langle 2e_1+4e_2\rangle$. Since $q$ is not a 3-power, it follows that $\langle 2e_1+e_2 \rangle \neq \langle 2e_1+4e_2\rangle$. Therefore $\x{\beta,\gamma} \neq 1$

Next assume that $q=3^r$ for some $r>1$. Let $\omega$ be a primitive root of unity in $\GF{q}$, and so in particular $\omega \neq -1$. Then
\begin{align*} 
\langle e_1+\omega e_2 \rangle^{\h{\alpha,\beta}\h{\beta,\gamma}\h{\gamma,\alpha}}&=\left\langle (1+\omega)e_1-\omega e_2 \right\rangle^{\h{\beta,\gamma}\h{\gamma,\alpha}}\\
&= \langle (1+\omega)e_1+\omega e_2 \rangle^{\h{\gamma,\alpha}}\\
&= \langle (1+\omega)e_1+(-1+\omega) e_2 \rangle.
\end{align*}
Hence $\x{\beta,\gamma}$ maps $\langle e_1+\omega e_2 \rangle=\langle (1+\omega)e_1+(\omega+\omega^2)e_2\rangle$ to $\langle (1+\omega)e_1+(-1+\omega) e_2 \rangle$. Since $q \geq 9$, we have that $w^{2} \neq -1$ and so $\langle (1+\omega)e_1+(\omega+\omega^2)e_2\rangle \neq \langle (1+\omega)e_1+(-1+\omega) e_2 \rangle$. Therefore, it this case we also have $\x{\beta,\gamma} \neq 1.$

\item It follows by part (i) that the generating set for $G(\ell)$ is invariant under conjugation by $L_{\alpha,\ell}$.

\item By Lemma \ref{cyclicaction}\ref{beta,ell}, $L_{\alpha,\ell}$ is 2-transitive on $\Gamma_0(\ell) \setminus \{\alpha\}$, and so $L_{\alpha,\ell}G(\ell)$ is also. Recall that 2-transitive groups are primitive, and non-trivial normal subgroups of primitive groups are transitive. Hence $G(\ell) \trianglelefteq L_{\alpha,\ell}G(\ell)$ is transitive on $\Gamma_0(\ell)\backslash \{\alpha\}$. 
\qedhere
\end{enumerate}
\end{proof}

\begin{remark}
\begin{enumerate}[label=\rm{(\roman*)}]
\item If instead $q=3$, Lemma \ref{collinear}(ii) does not hold. In a certain sense, this allows for the emergence of $\M_{12}$. 
\item The elements $\x{\beta,\gamma}$ in Lemma \ref{collinear} have a surprising range of possible cycle types on $\Gamma_0(\ell) \setminus \{\alpha \}$. Here is a sample of the cycle types for some small values of $q$.

\begin{tabular}{|c||c|c|c|c|c|c|c|c|c|c|c|}
\hline
$q$ & 5 & 7 & 9 & 11 & 13 & 17 & 19 & 23 & 25 & 27 & 29\\ \hline
cycle type & $1^1.4^1$ & $7^1$ & $1^5.4^1$ & $1^2.3^3$ & $2^3.7^1$ & $3^3.8^1$ & $1^2.17^1$ & $23^1$ & $1^1.4^1.5^4$ & $1^3.4^6$ & $1^2.13^1.14^1$ \\ \hline
\end{tabular}
\end{enumerate}
\end{remark}

\begin{lemma}\label{primitive} $G$ acts primitively on $\Omega$.
\end{lemma}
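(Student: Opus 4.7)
My plan is first to establish transitivity and then to rule out non-trivial blocks.

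For transitivity, Lemma \ref{collinear}(iv) gives that $G(\ell)$ is transitive on $\Gamma_0(\ell) \setminus \{\alpha\}$ for every line $\ell \ni \alpha$, so the points on each such line lie in a single $G$-orbit. To merge across distinct lines $\ell_1 \ne \ell_2$ through $\alpha$, I pick $\beta \in \ell_1 \setminus \{\alpha\}$ and $\gamma \in \ell_2 \setminus \{\alpha\}$: the triple $\alpha,\beta,\gamma$ is then non-collinear, so by Lemma \ref{notcollinear} the element $\x{\beta,\gamma} \in G$ swaps $\beta$ and $\gamma$. Hence $G$ is transitive on $\Omega$.

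The crux of primitivity is the following \emph{Key Claim}: if a block $B$ of $G$ contains $\ell \setminus \{\alpha\}$ for some line $\ell \ni \alpha$, then $B = \Omega$. Suppose otherwise and pick $\gamma \in \Omega \setminus B$; necessarily $\gamma \notin \ell$ since $\ell \setminus \{\alpha\} \subseteq B$. For any $\beta \in \ell \setminus \{\alpha\}$ the triple $\alpha,\beta,\gamma$ is non-collinear, and Lemma \ref{notcollinear} gives $\x{\beta,\gamma} = (\beta,\gamma)\t{\alpha,\beta}\t{\beta,\gamma}\t{\gamma,\alpha}$. Since $\beta^{\x{\beta,\gamma}} = \gamma \notin B$, the block property forces $B \cap B^{\x{\beta,\gamma}} = \emptyset$. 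However, for any $\delta \in \ell \setminus \{\alpha,\beta\}$, the three factors $(\beta,\gamma)$, $\t{\beta,\gamma}$, $\t{\gamma,\alpha}$ all fix $\delta$ (their supports lie on $\{\beta,\gamma\}$ and on the lines $\langle\beta,\gamma\rangle$, $\langle\gamma,\alpha\rangle$, which meet $\ell$ only in $\beta$ and $\alpha$ respectively, none of which equal $\delta$), so $\delta^{\x{\beta,\gamma}} = \t{\alpha,\beta}(\delta) \in \ell \setminus \{\alpha,\beta\} \subseteq B$, contradicting $B \cap B^{\x{\beta,\gamma}} = \emptyset$.

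Given the Key Claim, it remains to show that every block $B$ with $|B| \geq 2$ contains some $\ell \setminus \{\alpha\}$ entirely. Given distinct $\beta,\gamma \in B$, I split on whether $\alpha,\beta,\gamma$ are collinear. In the collinear case (on $\ell$), $B \cap (\ell \setminus \{\alpha\})$ is a $G(\ell)$-block of size $\ge 2$ dividing $q$; to force it up to $\ell \setminus \{\alpha\}$ I would combine Lemma \ref{collinear}(iii) with the $2$-transitivity of $L_{\alpha,\ell}$ from Lemma \ref{cyclicaction}\ref{beta,ell} --- pressing the fact that $L_{\alpha,\ell}$, being $2$-transitive (hence primitive) on $\ell \setminus \{\alpha\}$, cannot preserve a non-trivial partition --- together with auxiliary non-collinear generators $\x{\beta',\epsilon}$ for $\beta' \in B \cap \ell$ and $\epsilon \in B \setminus \ell$, whose induced action on $\ell$ is precisely $\t{\alpha,\beta'}$ and which therefore force $B \cap \ell$ to be closed under each involution $\t{\alpha,\beta'}$ with $\beta' \in B \cap \ell$. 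If instead no two points of $B$ are collinear with $\alpha$, so $|B| \le q+1$, I would produce an element $\x{\gamma,\delta}$ fixing a chosen $\beta_2 \in B$ while moving $\beta_1 \in B$ outside $B$; the constraint that $\beta_2$ avoid the triangle $\langle\alpha,\gamma\rangle \cup \langle\gamma,\delta\rangle \cup \langle\alpha,\delta\rangle$ excludes only $3q-1$ points, leaving $(q-1)^2$ admissible choices so such a pair $(\gamma,\delta)$ exists, and a separate choice ensures $\beta_1$ is displaced. The main obstacle is the collinear case: Lemma \ref{collinear}(iv) alone gives only transitivity of $G(\ell)$ on $\ell \setminus \{\alpha\}$ and does not rule out intermediate $G(\ell)$-blocks of size properly dividing $q$, so the argument must genuinely draw on information from outside $G(\ell)$, either via the normaliser $L_{\alpha,\ell}$ or via cross-line generators.
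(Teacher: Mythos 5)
Your transitivity argument is fine, and your Key Claim is correct as proved; it is in fact exactly the final contradiction in the paper's own proof (an element $\x{\beta,\gamma}$ with $\gamma$ off $\ell$ moves $\beta$ out of $B$ while mapping $\Gamma_0(\ell)\setminus\{\alpha,\beta\}$ back into $B$). The gap is in the reduction to the Key Claim, precisely at the point you flag yourself: a block $B$ with $|B|>1$ that is \emph{entirely contained} in $\Gamma_0(\ell)\setminus\{\alpha\}$ for a single line $\ell$ through $\alpha$. Neither of your proposed tools closes this case. The auxiliary generators $\x{\beta',\epsilon}$ with $\epsilon\in B\setminus\ell$ do not exist, since here $B\setminus\ell=\emptyset$. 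The appeal to the $2$-transitivity of $L_{\alpha,\ell}$ also does not work as stated: $L_{\alpha,\ell}G(\ell)$ is indeed primitive on $\Gamma_0(\ell)\setminus\{\alpha\}$, but a transitive normal subgroup of a primitive group need not be primitive (consider $V_4\trianglelefteq \Sym(4)$ on four points), and in any case $B$ is a block for $G$, not for $L_{\alpha,\ell}G(\ell)$, so primitivity of the latter gives no direct information about $B$. Your divisibility remark $|B|\mid q$ does finish the matter when $q$ is prime, but for $q=9,25,27,\ldots$ a block of size a proper power of $p$ is not excluded by anything you have written, and you explicitly leave this open.

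The missing idea, which is the heart of the paper's proof, is not to analyse $B$ in place but to replace it by another block of the \emph{same system}. Take $\gamma,\delta\in B$ distinct and any $\epsilon\in\Omega$ not on $\ell$. Then $B^{\x{\gamma,\epsilon}}$ is again a block; it contains $\gamma^{\x{\gamma,\epsilon}}=\epsilon\notin\ell$ and, since the supports of $(\gamma,\epsilon)$, $\t{\gamma,\epsilon}$ and $\t{\epsilon,\alpha}$ meet $\ell$ only in $\gamma$ and $\alpha$, it also contains $\delta^{\x{\gamma,\epsilon}}=\delta^{\t{\alpha,\gamma}}\in\Gamma_0(\ell)\setminus\{\alpha\}$. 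So this new block straddles $\ell$, and for a straddling block the argument is the easy one: every $g\in G(\ell)$ fixes the off-line point (as $G(\ell)$ fixes $\Omega\setminus\Gamma_0(\ell)$ pointwise), hence fixes the block setwise, and transitivity of $G(\ell)$ on $\Gamma_0(\ell)\setminus\{\alpha\}$ from Lemma \ref{collinear}(iv) then forces the block to contain all of $\Gamma_0(\ell)\setminus\{\alpha\}$; now your Key Claim applies and contradicts non-triviality. (Your non-collinear case is also repairable more simply: any $g\in G(\ell_{\beta_1})$ fixes $\beta_2\in B$, hence fixes $B$ setwise, and choosing $g$ to move $\beta_1$ produces a second point of $B$ on $\ell_{\beta_1}$, returning you to the collinear case; as written, your counting selects $\beta_2$ after $\gamma,\delta$ rather than the other way round.)
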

\begin{proof}  For $\beta \in \Omega$, let $\ell_{\beta}=\langle \alpha, \beta \rangle$ and $\Gamma_{\beta}=\Gamma_0(\ell_{\beta}) \cap \Omega = \Gamma_0(\ell_{\beta})\backslash \{\alpha\}$.

We claim that if $G$ is imprimitive, then there exists a non-trivial block $B$ and a point $\beta \in B$ with $\Gamma_{\beta} \subseteq B \not\subseteq \Gamma_{\beta}$, from which we will derive a contradiction. \medskip

First we find a non-trivial block $B$ containing a point $\beta$ such that $B \not\subseteq \Gamma_{\beta}$.
If this holds for all blocks, then we are done. Hence assume that $C$ is a non-trivial block containing a point $\gamma$ such that $C \not\subseteq \Gamma_{\gamma}$. Then there exists $\delta \in C \backslash \{\gamma\} \subseteq \Gamma_{\gamma}$ and $\epsilon \in \Omega \backslash\Gamma_{\gamma}$. 
Now $C^{\x{\gamma,\epsilon}}$ contains $\delta^{\x{\gamma,\epsilon}} \in \Gamma_{\gamma}$ and $\gamma^{\x{\gamma,\epsilon}} = \epsilon \notin \Gamma_{\gamma}$. Hence let $B=C^{\x{\gamma,\epsilon}}$ and $\beta = \delta^{\x{\gamma,\epsilon}}$.\medskip

We now show $\Gamma_{\beta} \subseteq B$. Let $\zeta \in \Gamma_B$, by the above we may let $\eta \in B \backslash \Gamma_B$. By Lemma \ref{collinear}(iv) there exists $g \in G(\ell_{\beta})$ with $\beta^g=\zeta$. Hence $\zeta = \eta^{\x{\beta,\eta}g} \in B^{\x{\beta,\eta}g}$. Since $G(\ell_{\beta})$ fixes $\Omega\backslash \Gamma_{\beta}$ pointwise, it follows that $\beta^{\x{\beta,\eta}g}=\eta \in B \cap B^{\x{\beta,\eta}g}$. Hence $\zeta\in B$.\medskip

Hence the claim holds. Let $\theta \in \Omega \backslash B$. Since $\Gamma_{\beta} \subseteq B$, it follows that $\ell_{\theta} \neq \ell_{\beta}$, and so $\x{\beta,\theta}$ interchanges $\beta$ and $\theta$ and fixes $\Gamma_{\beta} \backslash \{\beta\}$ setwise. Therefore $\theta  \in B^{\x{\beta,\theta}}\backslash B$ and $\Gamma_{\beta} \backslash \{\beta\} \subseteq B \cap B^{\x{\beta,\theta}}$, a contradiction.
\end{proof}

We can now complete the proof of Theorem \ref{main}.

\begin{proof}[Proof of Theorem \ref{main}]
By Lemma \ref{primitive} $G$ acts primitively on $\Omega$. Let $\beta,\gamma \in \Omega$ be distinct and such that $\alpha,\beta, \gamma$ are collinear. Then $\x{\beta,\gamma}$ is an element of $G$ with support size at most $q+1$. It is a simple exercise to show that $q+1<2(\sqrt{|\Omega|}-1)$, and so $\Alt(n) \leq G$ by \cite[Corollary 3]{Jordan}. Since $\h{\beta,\gamma}$ is a product of $\frac{q + 1}{2}$ transpositions, $\x{\beta,\gamma}$ is an even permutation if and only if $\frac{q + 1}{2}$ is even, which is equivalent to $q \equiv 3 \bmod 4$ and completes the proof of the theorem.
\end{proof}

\end{document}